\title{ On the Shinbrot's criteria for energy equality to Newtonian fluids: A simplified proof, and an extension of the range of application.}
\author{Hugo Beir\~{a}o da Veiga$^{1,}$ \footnote{Partially supported  by FCT (Portugal) under the project: UIDB/MAT/04561/2020.}
\qquad Jiaqi Yang$^{2,}$\footnote{Hugo Beir\~{a}o da Veiga (\texttt{hbeiraodaveiga@gmail.com}) and Jiaqi Yang (\texttt{yjqmath@nwpu.edu.cn})}}
\date{
\small $^1$ Department of Mathematics, Pisa University, Pisa, Italy\\
\small $^2$ School of Mathematics and Statistics,
Northwestern Polytechnical University,
Xi'an, 710129, China}
\documentclass[12pt]{article}
\usepackage{amsfonts}
\usepackage{mathrsfs}
\usepackage{amsmath}
\usepackage{amssymb,extarrows}
\usepackage{multicol}
\usepackage{float}
\usepackage{makeidx}
\usepackage{layout}
\usepackage{array}
\usepackage{a4wide}
\usepackage{boxedminipage}
\usepackage{hyperref}
\usepackage{latexsym}
\usepackage{color}
\usepackage{dsfont}
\usepackage{graphicx}
\usepackage{ulem}
\usepackage{amsthm}

\newtheorem{theorem}{Theorem}[section]

\theoremstyle{remark}

\theoremstyle{definition}

\numberwithin{equation}{section}

\newcommand{\f}{\frac}
\newcommand{\n}{\nabla}

\newcommand{\ed}{\end{document}}
%==================

\newcommand{\na}{{\nabla}}
\newcommand{\pa}{{\partial}}

\newcommand{\Om}{{\Omega}}

%\newcommand{\La}{{\Lambda}}

%\newcommand{\n}{{\underline{n}}}

%==========================
\begin{document}
\maketitle
\begin{abstract}
We show that the classical Shinbrot's criteria to guarantee that a Leray-Hopf solution satisfies the energy equality follows trivially from the $\,L^4( (0\,,T)\times\Om))$ Lions-Prodi particular case. Moreover we extend Shinbrot's result to space coefficients $ r \in (3,\,4)\,.$ In this last case our condition coincides with Shinbrot condition for $r=4$, but for $r<4$ it is more restrictive than the classical one, $ 2/p + 2/r = 1\,.$ It looks significant that in correspondence to the extreme values $r=3$ and $r=\infty$, and just for these two values, the conditions become respectively $u \in L^\infty(L^3)$ and $u \in L^2(L^\infty)$, which imply regularity
by appealing to classical Ladyzhenskaya-Prodi-Serrin (L-P-S) type conditions. However, for values $r\in (3,\infty)$ the L-P-S condition does not apply, even for the more demanding case $\,3<r<4\,.$ The proofs are quite trivial, by appealing to interpolation, with $L^\infty(L^2)$ in the first case and with $L^2(L^6)$ in the second case. The central position of this old classical problem in Fluid-Mechanics, together with the simplicity of the proofs (in particular the novelty of the second result) looks at least curious. This may be considered a merit of this very short note.\par%
\end{abstract}

\noindent \textbf{Mathematics Subject Classification:} 35Q30, 76A05, 76D03.

\vspace{0.2cm}
\noindent \textbf{Keywords:} Newtonian fluids; Energy equality; Shinbrot's type results.

\vspace{0.2cm}

\section{Introduction and results}
We are interested on the energy equality for weak (Leray-Hopf) solutions to Newtonian incompressible fluids
\begin{equation}\label{eq:NS}
\begin{cases}
\pa_t u+\,u\cdot\nabla\,u-\Delta u+\nabla \pi=0\,,\quad &\textrm{in $\Omega\times(0,T)$}\,,\\
\nabla \cdot\,u=0\,,\quad& \textrm{in $\Omega\times(0,T)$}\,,\\
u=0\,,\quad& \textrm{on $\pa\Omega\times(0,T)$}\,,\\
u(\cdot,\,0)=\,u_0\,, \quad &\textrm{in $\Omega$}\,,
\end{cases}
\end{equation}
where $\,\Om\,$ is a bounded, smooth domain. The energy equality reads
\[
\int_{\Omega}|u(t_0)|^2dx+2\int_0^{t_0}\int_{\Omega}|\na u(\tau)|^2\,dxd\tau=\int_{\Omega}|u_0|^2dx\,,\quad \textrm{for any} \quad t_0\in[0,T).
\]

\vspace{0.2cm}

In reference \cite{shinbrot}  M.Shinbrot shows that if a weak Leray-Hopf solution $\,u\,$ to the Navier-Stokes equations \eqref{eq:NS} satisfies
\begin{equation}\label{shin}
u \in\,L^p (0,\,T; L^r(\Omega))\,,
\end{equation}
where
\begin{equation}\label{shonas}
\frac 2p + \frac 2r =\,1\,,\quad  \textrm{and} \quad r\geq\,4,
\end{equation}
then $\,u\,$ satisfies the energy equality. This result is a generalization of previous results due to G.Prodi \cite{Prodi}  and J.L.Lions \cite{lions}, where these authors proved the above result for $p=\,r=\,4\,.$\par%
For convenience we write the condition \eqref{shin}, \eqref{shonas} in the equivalent form
\begin{equation}\label{shin1}
u \in\,L^{\f{2r}{r-2}}(0,\,T; L^r(\Omega))\,,\quad r\geq 4\,.
\end{equation}
Recall that Leray-Hopf solutions verify $\,u\in L^\infty(0,T; L^2(\Om)) \cap L^2(0,T; W^{1,2}(\Om))\,.$ In particular $\,u\in L^2(0,T; L^6(\Om))\,.$
Below we prove the Theorem \ref{theo-main}. It is worth noting that item (i) is the well know Shinbrot result. But our proof, obtained by a trivial reduction to Lions-Prodi result, is simpler and more significant. Item (ii) looks completely new. We refer the reader to the remarks made in the abstract.
\begin{theorem}\label{theo-main}
Let $u_0 \in H $  and let $u$ be a Leray-Hopf weak solution of the Navier-Stokes equations \eqref{eq:NS}, where $H$ is the completion of $\mathcal{V}=\{\phi \in C^{\infty}_0(\Om):\,\n\cdot \phi=\,0\,\}$ in $L^2(\Omega)$. Assume that $u$ satisfies one of the two conditions below:
\begin{equation}\label{pqcases}
\begin{cases}
(i)\quad  u \in\,L^{\f{2r}{r-2}}(0,\,T; L^r(\Omega))\,,\quad r\in [4,\,\infty]\,.\\
(ii)\quad  u \in\, L^{\frac{r}{r-3}}(0,\,T; L^r(\Om))\,, \quad  r\in [3,\,4]\,.
\end{cases}
\end{equation}
Then $u$ satisfies the energy equality
\begin{equation}\label{eneq}
\| u(t_0)\|_2^2 \,+\, 2\,\int_{0}^{t_0} \,\|\n u(\tau)\|_2^2 \,d\tau=\,\| u_0\|_2^2\,,
\end{equation}
for any $t_0\in [0,T)$.
\end{theorem}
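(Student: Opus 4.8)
The plan is to reduce both cases to the Lions--Prodi particular case, namely the fact that $u \in L^4(0,T;L^4(\Omega))$ already forces the energy equality \eqref{eneq}. Once this membership is established there is nothing more to do; the entire content of the argument is a spatial interpolation, combined with the known Leray--Hopf integrability, that upgrades each hypothesis in \eqref{pqcases} to fourth-power integrability in spacetime.

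For case (i) I would interpolate the spatial $L^4$ norm between $L^2$ and $L^r$. Writing
\[
\|u(t)\|_{L^4}^4 \le \|u(t)\|_{L^2}^{4(1-\theta)}\,\|u(t)\|_{L^r}^{4\theta},
\qquad \frac14 = \frac{1-\theta}{2} + \frac{\theta}{r},
\]
one finds $\theta = \frac{r}{2(r-2)}$, hence $4\theta = \frac{2r}{r-2}$. Since every Leray--Hopf solution satisfies $u \in L^\infty(0,T;L^2(\Omega))$, bounding the factor $\|u(t)\|_{L^2}^{4(1-\theta)}$ by its essential supremum and integrating in time gives
\[
\int_0^T \|u(t)\|_{L^4}^4\,dt \le \|u\|_{L^\infty(L^2)}^{4(1-\theta)}\int_0^T \|u(t)\|_{L^r}^{2r/(r-2)}\,dt,
\]
whose right-hand side is finite precisely by hypothesis (i). Thus $u \in L^4((0,T)\times\Omega)$ and Lions--Prodi applies; the endpoints $r=4$ and $r=\infty$ are immediate.

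For case (ii), with $r \in [3,4]$, the $L^\infty(L^2)$ bound is too weak, so instead I would interpolate $L^4$ between $L^r$ and $L^6$, exploiting $u \in L^2(0,T;L^6(\Omega))$, which also holds for every Leray--Hopf solution. Solving
\[
\frac14 = \frac{1-\theta}{r} + \frac{\theta}{6}
\]
yields $\theta = \frac{3(r-4)}{2(r-6)} \in [0,\tfrac12]$, so that $1-\theta = \frac{r}{2(6-r)}$. After the pointwise-in-time interpolation I would apply Hölder in time to the product $\|u(t)\|_{L^r}^{4(1-\theta)}\,\|u(t)\|_{L^6}^{4\theta}$ with conjugate exponents chosen so that the first factor is paired with the integrability $L^{r/(r-3)}$ and the second with $L^2$. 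The key arithmetic check is that these choices are admissible, i.e. the two Hölder exponents are genuinely conjugate; one verifies directly that
\[
\frac{4(1-\theta)(r-3)}{r} + 2\theta = 1 \qquad \text{for all } r\in[3,4],
\]
which is exactly the reason the exponent $\frac{r}{r-3}$ appears in hypothesis (ii). This again places $u$ in $L^4((0,T)\times\Omega)$ and closes the argument via Lions--Prodi.

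The only real obstacle is the bookkeeping in case (ii): one must confirm that the interpolation exponent $\theta$ and the two time-Hölder exponents are \emph{simultaneously} compatible, and that the resulting time integrability matches the stated hypothesis exactly rather than a strictly stronger one. The boundary values $r=3$ (where $\theta=\tfrac12$, the hypothesis reads $u\in L^\infty(L^3)$, and the interpolation is against $L^2(L^6)$) and $r=4$ (where $\theta=0$ and one recovers Lions--Prodi directly) serve as consistency checks and match the remarks in the abstract.
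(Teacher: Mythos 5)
Your proposal is correct and takes essentially the same route as the paper: spatial interpolation of $L^4$ between $L^2$ and $L^r$ for item (i), and between $L^r$ and $L^6$ for item (ii), followed by H\"older in time and reduction to the Lions--Prodi $L^4((0,T)\times\Omega)$ case. Your interpolation exponents, the conjugacy check $\frac{4(1-\theta)(r-3)}{r}+2\theta=1$, and the endpoint discussion all match the paper's computation.
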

\begin{proof}
Proof of item (i):\par%
We appeal to $\,u\in L^\infty(L^2).$ By interpolation we show that
$$
\|u\|_4 \leq\,\|u\|_2^{\f{r-4}{2(r-2)}} \|u\|_r^{\f{r}{2(r-2)}}\,, \quad \textrm{for} \quad r\geq 4\,.
$$
By integration in $(0,\,T)$ one easily proves that the estimate
$$
\|u\|^4_{L^4(L^4)}\leq \|u\|^{\f{2(r-4)}{r-2}}_{L^\infty(L^2)} \|u\|^{\f{2r}{r-2}}_{L^{\f{2r}{r-2}}(L^r)}
$$
holds. The energy inequality follows from Lions-Prodi result.%

\vspace{0.2cm}

Proof of item (ii):\par%
Now we appeal to $\,u\in L^2(L^6).$ Again by interpolation we show that
$$
\|u\|_4 \leq\,\|u\|_6^{\f{3(4-r)}{2(6-r)}} \|u\|_r^{\f{r}{2(6-r)}}\,, \quad \textrm{for} \quad 1\leq r\leq 4.
$$
Hence
$$
\int_{0}^{T} \|u\|^4_4 \, dt  \leq\,\int_{0}^{T} \|u\|_6^{\f{6(4-r)}{6-r}}\|u\|_r^{\f{2r}{6-r}} \,dt\,,
$$
for $\,r\,$ as above. Next we apply H\H older inequality with exponents $\,s=\f{6-r}{3(4-r)}$ and $\,s'=\f{6-r}{2(r-3)}\,,$ for $3<r\leq4\,.$
This leads to
$$
\|u\|^4_{L^4(L^4)} \leq\,\|u\|^{\f{6(4-r)}{6-r}}_{L^2(L^6)} \, \|u\|^{\f{2r}{6-r}}_{L^{\f{r}{r-3}}(L^r)}\,.
$$
The thesis of item (ii) follows.
\end{proof}
Let's comment on the above results. First of all note that the results in item (i) and (ii) glue perfectly for the common value $r=4$.\par%
Furthermore, the Shinbrot number
\begin{equation}\label{shonum}
\theta(p,\,r):=\frac 2p + \frac 2r\,,
\end{equation}
which is equal to $1$ in item (i), is given in item (ii) by $\,2-\f4r\,.$ This number decreases with $r\,,$ and is less than the classical value $\,1\,,$ for $r<4$. Note that, at least formally, larger is the Shinbrot number better is the result.\par
Another main remark is the following. It is well known that if a Leray-Hopf solution $u$ satisfies the so called Ladyzhenskaya, Prodi, Serrin (LPS) condition, namely
\begin{equation}\label{LPS}
u \in\,L^p (0,\,T; L^r(\Omega))\,,
\end{equation}
where
\begin{equation}\label{BOH}
\frac 2p + \frac 3r =\,1\,,\quad  \textrm{for} \quad r \in [3,\,\infty],
\end{equation}
then $\,u\,$ is smooth. In this case the energy equality follows from the smoothness. Hence it is worth noting that the conditions imposed in the above two items do not fail inside the range of the LPS condition, for $\,3<r<\infty.$ Furthermore, and this fact is particularly significant, the extremal values $r=\infty$ in item (i) and $r=3$ in item (ii) lead respectively to $u\in L^2(L^\infty)$ and $u\in L^\infty(L^3)\,$, which are two (quite particular) cases of LPS condition.
\section{Comparison with previous results}
In references \cite{BV-JY} and \cite{BC} the authors proved the energy equality under conditions of type
\begin{equation}\label{shiq}
u \in\,L^p (0,\,T; W^{1,\,q}(\Omega))\,,
\end{equation}
instead of type \eqref{shin1}. However the above Shinbrot's number $\theta(p,r)$ was extended to
this new situation  by appealing to the sharp Sobolev's embedding theorem
\begin{equation}\label{sobolef}
W^{1,\,q}(\Omega) \subset \,L^r(\Omega)\,,
\end{equation}
where
\begin{equation}\label{sobolev}
\f1q=\,\f1r + \f13\,,
\end{equation}
and $ q<3\,.$ In the above references the authors compare the two type of results,  \eqref{shin1} and  \eqref{shiq}, at the light of Shinbrot's number, assumed as a measure of strength. This comparison was extended to other results, obtained by different authors. We refer the reader to the above references. In the sequel we merely extend to the new range $\,r\in(3,4]\,$  the comparison between the results stated in item (ii)  and the results in \eqref{shin1} and  \eqref{shiq}, the natural complement to the similar comparison done in these references with respect to the classic range  $\,r\in[4,\infty)\,.$ Note that, by \eqref{sobolev}, one has
\begin{equation}\label{equiv}
\f32<q \leq \f{12}{7}\,,       \quad  3<r\leq 4\,.%
\end{equation}
Hence we will restrict the conditions of type \eqref{shiq} to the above $q-$range. Inside this range, in \cite{BC} theorem 1 item (i) the authors proved energy equality under assumption \eqref{shiq}, where $ p=\f{q}{2q-3}.$ By appealing to  \eqref{sobolev}  one gets $\,p=\,\f{r}{r-3}\,. $ This immediately shows that,  curiously, the Shinbrot's number is given by $2-\,\f{4}{r}$, exactly as shown for the solutions  in the above theorem \ref{theo-main}, item (ii).\par%
A similar calculation applied to the Theorem 4.4 item (i) in  reference \cite{BV-JY} leads to the better, constant Shinbrot's number $\f{10}{9}$, still obtained for $\,q\leq\,\f95\,$ ($r\leq\, 4+\,\f12)\,.$

\end{document}